\numberwithin{equation}{section}
\numberwithin{figure}{section}
\theoremstyle{plain}
\newtheorem{thm}{\protect\theoremname}
\theoremstyle{plain}
\newtheorem{lem}[thm]{\protect\lemmaname}
\providecommand{\lemmaname}{Lemma}
\providecommand{\theoremname}{Theorem}
\begin{document}
\title{Endpoint entropy Fefferman-Stein bounds for commutators}
\begin{abstract}
In this paper endpoint entropy Fefferman-Stein bounds for Calderón-Zygmund
operators introduced by Rahm in \cite{R} are extended to iterated
Coifman-Rochberg-Weiss commutators.
\end{abstract}

\author{Pamela A. Muller}
\email{pamela.muller@uns.edu.ar }
\address{Departamento de Mátematica. Universidad Nacional del Sur - Instituto
de Matemática de Bahía Blanca (CONICET), Bahía Blanca, Argentina}
\author{Israel P. Rivera-Ríos}
\email{israelpriverarios@uma.es}
\address{Departamento de Análisis Matemático, Estadística e Investigación Operativa
y Matemática Aplicada. Facultad de Ciencias, Universidad de Málaga,
España.}
\maketitle

\section{Introduction and main result}

In the last decade, quantitative weighted estimates have been an important
topic of study in harmonic analysis. The motivation of the results
that we present here can be traced back to the so called Muckenhoupt-Wheeden
conjecture. It is a classical result due to Fefferman and Stein that
if $w$ is any weight, namely a non negative locally integrable function,
then 
\begin{equation}
w\left(\left\{ x\in\mathbb{R}^{n}\,:\,Mf(x)>t\right\} \right)\leq c_{n}\frac{1}{t}\int_{\mathbb{R}^{n}}|f|Mw\label{eq:FS}
\end{equation}
where $c_{n}$ is a constant depending just on $n$ and $M$ stands
for the classical Hardy-Littlewood maximal function,
\[
Mf(x)=\sup_{x\in Q}\frac{1}{|Q|}\int_{Q}|f|
\]
where each $Q$ is a cube with its sides parallel to the axis. The
Muckenhoupt-Wheeden conjecture considered the posibility of replacing
$M$ by the Hilbert transform in \ref{eq:FS}. In the case of dyadic
models that conjecture was disproved by Reguera in \cite{R1} and
for the Hilbert transform by Reguera, as well, in a joint work with
Thiele \cite{RT}. 

Being that conjecture disproved a natural question would be whether
(\ref{eq:FS}) could hold for Calderón-Zygmund operators or at least
for the Hilbert transform with the maximal operator in the right hand
side replaced by a slightly larger one. That direction of research
had been already followed in the 90s by Pérez \cite{PSIO}, who showed
that the following inequality holds
\begin{equation}
w\left(\left\{ x\in\mathbb{R}^{n}\,:\,Tf(x)>t\right\} \right)\leq c_{n,\rho}\int_{\mathbb{R}^{n}}|f|M_{L(\log L)^{\rho}}w\qquad\rho>0\label{eq:FST}
\end{equation}
where $T$ stands for any Calderón-Zygmund, $c_{n,\rho}$ is a constant
that blows up when $\rho\rightarrow0$, and
\[
M_{L(\log L)^{\rho}}w=\sup_{x\in Q}\|w\|_{L(\log L)^{\rho}}.
\]
In order to make sense of $M_{L(\log L)^{\rho}}w$, we recall that,
given a Young function $A:[0,\infty)\rightarrow[0,\infty)$, namely
a convex function such that $\lim_{t\rightarrow\infty}\frac{A(t)}{t}=\infty$
and $A(0)=0$ we define 
\[
\|f\|_{A(L),Q}=\inf\left\{ \lambda>0\,:\frac{1}{|Q|}\int_{Q}A\left(\frac{|f|}{\lambda}\right)\leq1\right\} .
\]
Abusing of notation we shall denote $\|f\|_{L(\log L)^{\gamma},Q}$
in the case in which $A(t)=t\log^{\gamma}(e+t)$ and analogously,
for instance $\|f\|_{L(\log\log L)^{\gamma},Q}$, for the case $A(t)=t\log^{\gamma}(e^{e}+\log(e+t))$.
A fundamental property of these averages is that if $A(t)\leq B(t)$
for every $t\geq t_{0}$ for a certain $t_{0}\geq0$, then 
\[
\|f\|_{A(L),Q}\lesssim\|f\|_{B(L),Q}.
\]
Furthermore, they satisfy a generalized Hölder inequality. If $A,B,C$
are Young functions such that $A^{-1}(t)B^{-1}(t)\lesssim C^{-1}(t)$,
then
\[
\|fg\|_{C,Q}\lesssim\|f\|_{A,Q}\|g\|_{B,Q}.
\]
Coming back to our discussion, it is worth noting that the development
of sparse domination theory led, directly or indirectly, to several
improvements for (\ref{eq:FST}). 
\begin{itemize}
\item In \cite{HP2} it was established that $c_{n,\rho}\simeq c_{n}\frac{1}{\rho}$
in (\ref{eq:FST}). That blow up in $\rho$ is sharp, for instance,
due to the sharp dependence on the $A_{1}$ constant for the Hilbert
transform settled in \cite{LNO}.
\item In \cite{DSLR} it was settled that $M_{L(\log L)^{\rho}}$ in (\ref{eq:FST})
could be replaced for even smaller operators such as $M_{L(\log\log L)^{1+\rho}}$
keeping $c_{n,\rho}\simeq c_{n}\frac{1}{\rho}$ as well.
\item In \cite{CLO} it was shown that if
\[
\lim_{t\rightarrow\infty}\frac{\phi(t)}{t\log\log(t)}=0
\]
then (\ref{eq:FST}) with $M_{\phi}$ in place of $M_{L(\log L)^{\rho}}$
cannot hold. Up until now the whether (\ref{eq:FST}) holds with $M_{L\log\log L}$
in the right hand side remains an open question.
\end{itemize}
Quite recently another line of research related to Fefferman Stein
estimates was initiated by Rahm in \cite{R}. The new approach consisted
in replacing in (\ref{eq:FST}) $M_{L(\log L)^{\rho}}$ by a suitable
entropy bump type maximal operator encoding $A_{\infty}$ type information
of the weight. Entropy bump conditions were introduced by Treil and
Volberg \cite{TV} to obtain sufficient conditions for the two weight
boundedness of Calderón-Zygmund operators. Also in \cite{TV} it was
shown for the case $p=2$ that entropy bump conditions are slightly
more general than the bump conditions introduced by Pérez in \cite{PMax}.
An easy approach to entropy bump estimates relying upon sparse domination
results was provided by Lacey and Spencer in \cite{LS} .

Let us recall now Rahm's result. Given a weight $w$, let $\rho_{w}(Q)=\frac{1}{w(Q)}\int_{Q}M(\chi_{Q}w)$
and assume that $\varepsilon:[1,\infty)\rightarrow[1,\infty)$ an
increasing function. Then we define
\[
M_{\varepsilon}w(x)=\sup_{Q}\frac{1}{|Q|}\int_{Q}w\log_{2}\left(2+\rho_{w}(Q)\right)\varepsilon(\rho_{w}(Q)).
\]
As we mentioned above, the operator $M_{\varepsilon}$ encodes $A_{\infty}$
type information since the $A_{\infty}$ constant is defined precisely
in terms of $\rho_{w}(Q)$. To be more precise $w\in A_{\infty}$
if $[w]_{A_{\infty}}=\sup_{Q}\rho_{w}(Q)<\infty$. Rahm shows that
for this operator $M_{\varepsilon}$ 
\[
w\left(\left\{ x\in\mathbb{R}^{n}\,:\,Tf(x)>t\right\} \right)\leq c_{n}\sum_{k=1}^{\infty}\frac{1}{\varepsilon\left(2^{2^{k}}\right)}\int_{\mathbb{R}^{n}}|f|M_{\varepsilon}w.
\]

Observe that $M_{\varepsilon}$ introduces a whole new scale of maximal
oparators suitable for endpoint estimates. It is not known if $M_{\varepsilon}$
is comparable to any Orlicz maximal operator as the ones mentioned
above.

Now we turn our attention to our contribution. We recall that given
a Calderón Zygmund operator $T$ and $b\in BMO$, the iterated commutator
$T_{b}^{m}$ is defined as
\[
T_{b}^{m}f(x)=[b,T_{b}^{m-1}]f(x)
\]
where 
\[
T_{b}^{1}f(x)=[b,T]f(x)=b(x)Tf(x)-T(bf)(x)
\]
is the Coifman-Rochberg-Weiss commutator.

Endpoint Fefferman-Stein type estimates for commutators have been
explored as well. The best known result up until now is the following
\cite{LORR,IFRR}. If $w$ is an arbitrary weight and $b\in BMO$
then
\[
w\left(\left\{ x\in\mathbb{R}^{n}\,:\,T_{b}^{m}f(x)>t\right\} \right)\leq c_{n,T}\frac{1}{\rho}\int_{\mathbb{R}^{n}}\Phi\left(\frac{\|b\|_{BMO}^{m}|f|}{t}\right)M_{L(\log L)^{m}(\log L)^{1+\rho}}w\qquad\rho>0.
\]
Our purpose in this note is to explore endpoint entropy bump weighted
estimates for $T_{b}^{m}$. Before presenting our results we need
a few more definitions. As we noted above, given a weight $w$ Rahm
defines $\rho_{w}(Q)$
\[
\rho_{w}(Q)=\frac{1}{w(Q)}\int_{Q}M(\chi_{Q}w).
\]
Note that, since $\frac{1}{|Q|}\int_{Q}M(\chi_{Q}w)\simeq\|w\|_{L\log L,Q}$,
we can rephrase this condition as
\[
\rho_{1,w}(Q)=\frac{\|w\|_{L\log L,Q}}{\langle w\rangle_{Q}}
\]
in the sense that $\rho_{w}(Q)\simeq\rho_{1,w}(Q)$. Hence it is natural
to generalize such a condition as follows. Given a positive integer
$k$ we define
\[
\rho_{k,w}(Q)=\frac{\|w\|_{L(\log L)^{k},Q}}{\langle w\rangle_{Q}}.
\]
Having that notation at our disposal we can also generalize the entropy
maximal function due to Rahm as follows. Given a Young $A$, a non-negative
integer $k$ and an increasing function $\varepsilon:[1,\infty)\rightarrow[1,\infty)$,
we define
\[
M_{\varepsilon,A,k}w(x)=\sup_{x\in Q}\langle w\rangle_{A,Q}\log_{2}\left(2+\rho_{k,w}(Q)\right)\varepsilon\left(\rho_{k,w}(Q)\right).
\]
If $A(t)=t$, we shall drop the subscript $A$. On the other hand,
if besides $A(t)=t$ we have that $k=1$ as well this operator reduces
to Rahm's $M_{\varepsilon}$. 

Armed with the preceding definitions we can finally state the Theorem
of this paper.
\begin{thm}
\label{Thm:MainResult}Let $m$ be a positive integer. Let $b\in BMO$
and assume that $T$ is a Calderón-Zygmund operator and that $w$
is a weight. Then
\[
w\left(\left\{ x\in\mathbb{R}^{n}\,:\,\left|T_{b}^{m}f\right|>t\right\} \right)\leq\kappa_{\varepsilon}\int_{\mathbb{R}^{n}}\Phi_{m}\left(\frac{\|b\|_{BMO}^{m}|f|}{t}\right)M_{\varepsilon,L(\log L)^{m},m+1}w
\]
where $\kappa_{\varepsilon}=c_{n,T,m}\max\left\{ \sum_{r=0}^{\infty}\frac{1}{\varepsilon(2^{2^{r}})},1\right\} $.
\end{thm}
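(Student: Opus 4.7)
My approach is to combine the known sparse domination of $T_b^m$ with a Rahm-style entropy stopping time built on the scale $\rho_{m+1,w}$. By the pointwise sparse estimate from \cite{LORR,IFRR}, there is a sparse collection $\mathcal S=\mathcal S(f)$ with
$$|T_b^m f(x)| \lesssim \sum_{k=0}^{m}\binom{m}{k}\sum_{Q\in\mathcal S} |b(x) - b_Q|^{m-k}\, \langle |b - b_Q|^{k} f\rangle_Q\, \chi_Q(x).$$
By homogeneity I may assume $\|b\|_{BMO}=1$ and $t=1$, and linearize the level set $\{|T_b^m f|>1\}$ by testing against $g = \chi_E$ with $E\subseteq\{|T_b^m f|>1\}$, so that $w(E)=\int gw$. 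Generalized Hölder in Orlicz spaces combined with the John--Nirenberg exponential integrability of $b-b_Q$ upgrades each piece via
$$\langle |b-b_Q|^k f\rangle_Q \lesssim \|f\|_{L(\log L)^k, Q}, \qquad \langle |b-b_Q|^{m-k} gw\rangle_Q \lesssim \|gw\|_{L(\log L)^{m-k}, Q},$$
and a further Hölder trade brings every term into a single bilinear sparse form of the shape $\sum_Q |Q|\,\|f\|_{L(\log L)^m,Q}\,\|gw\|_{L\log L,Q}$, with one residual $\log L$ set aside on the $w$-side for the stopping argument.

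The second step is to run a principal cube selection on $\mathcal S$ using $\rho_{m+1,w}$: call $P\in\mathcal S$ principal if it is maximal in $\mathcal S$, or if $\rho_{m+1,w}(P)\ge 2\,\rho_{m+1,w}(\pi(P))$, where $\pi(P)$ denotes the nearest strictly larger principal ancestor. Along every branch $\{Q\in\mathcal S : \pi(Q)=P\}$ the Orlicz average $\|w\|_{L(\log L)^m,Q}$ is comparable to $\|w\|_{L(\log L)^m,P}$, and sparsity plus a Carleson-type embedding for $\Phi_m$ gives
$$\sum_{Q\,:\,\pi(Q)=P} |Q|\,\|f\|_{L(\log L)^m,Q}\,\|gw\|_{L\log L,Q}\;\lesssim\;\|w\|_{L(\log L)^m,P}\int_{P}\Phi_m(|f|)\,g.$$
Grouping principal cubes by dyadic entropy level $\rho_{m+1,w}(P)\in[2^{2^r},2^{2^{r+1}})$ and inserting the factor $\log_2(2+\rho_{m+1,w}(P))\,\varepsilon(\rho_{m+1,w}(P))\simeq 2^r\varepsilon(2^{2^r})$ converts $\|w\|_{L(\log L)^m,P}$ into the pointwise bound $2^{-r}\varepsilon(2^{2^r})^{-1}M_{\varepsilon,L(\log L)^m,m+1}w$ on $P$; summing over principal cubes via their bounded overlap and then over $r\ge 0$ produces the entropy constant $\kappa_\varepsilon\simeq\sum_r\varepsilon(2^{2^r})^{-1}$ in front of the target integral $\int\Phi_m(|f|)M_{\varepsilon,L(\log L)^m,m+1}w$.

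The main obstacle is the bookkeeping in the first step: the $m+1$ pieces in the binomial expansion distribute the BMO factors differently between $f$ and $gw$, and I must show that \emph{every} piece ultimately collapses to the same target pair, namely $\Phi_m(|f|)$ on the $f$-side paired with the $L(\log L)^m$ average of $w$, with exactly one residual logarithm absorbed by the entropy on $\rho_{m+1,w}$ rather than by a strictly larger Orlicz average of $w$. Getting this accounting tight at level $m+1$, uniformly in $k$, is what forces the specific indices appearing in the statement; the entropy stopping time on $\rho_{m+1,w}$ is precisely designed to absorb that one residual logarithm, and reconciling it with the cascade of Orlicz Hölder trades in Step 1 is the technical crux of the proof.
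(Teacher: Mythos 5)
Your overall skeleton (sparse domination, then an entropy-type argument on the scale $\rho_{m+1,w}$) points in the right direction, but the central reduction is flawed. By testing against $g=\chi_E$ and then estimating the resulting bilinear form uniformly over all $|g|\le 1$ (nothing in your Step 2 uses $E\subseteq\{|T_b^m f|>1\}$), you are in effect trying to prove the strong-type bound $\int |T_b^m f|\,w\lesssim \kappa_\varepsilon\int\Phi_m(|f|)\,M_{\varepsilon,L(\log L)^m,m+1}w$, which is strictly stronger than the weak-type statement and fails already in the unweighted case. Concretely, your claimed branch-wise Carleson embedding is false: take $w\equiv1$, $g\equiv1$, $f=\chi_{Q_0}$ and let $\mathcal{S}$ be a chain of nested dyadic cubes $Q_0\subset Q_1\subset\cdots\subset Q_N=P$ (a Carleson family); then $\sum_{j}|Q_j|\,\|f\|_{L(\log L)^m,Q_j}\simeq |Q_0|\sum_{j\le N}j^m\simeq|Q_0|N^{m+1}$, while $\|w\|_{L(\log L)^m,P}\int_P\Phi_m(|f|)\,g\simeq|Q_0|$, so the inequality degenerates as $N\to\infty$ (even for $m=0$ this is just the failure of $L^1\to L^1$ for sparse operators). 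The weak-type endpoint is only reachable because one works on the level set itself: in the paper the argument restricts to $G_i=\{T^{m,m}_{b,\mathcal{S}_i}f> c,\ Mf\le 56^{-m}\}$, stratifies the cubes by $\langle f\rangle_Q\simeq 56^{-km}$, and, crucially, closes with an absorption inequality $w(G_i)\le c\int f\,M_{\varepsilon,L(\log L)^m,m+1}w+\nu_i\,w(G_i)$ with $\nu_i<1$, obtained from the Carleson packing of the sets $Q^t$ and $\tilde E_Q$ with $t=7^{km}$. None of this survives once you pass to a bilinear estimate uniform in $g$.

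The second gap is the Orlicz bookkeeping you yourself flag as the crux. For the piece with all $m$ oscillations falling on the weight side (the operator $\mathcal{T}^{m,m}_{b,\mathcal{S}}$ in the paper's notation), generalized H\"older leaves you with $\|gw\|_{L(\log L)^m,Q}$, and there is no ``H\"older trade'' that moves those logarithms onto $f$: you cannot collapse every binomial term to the pair $\|f\|_{L(\log L)^m,Q}\,\|gw\|_{L\log L,Q}$. The mechanism that actually absorbs those $m$ logarithms, and the reason the index $m+1$ appears, is different: one pairs the John--Nirenberg exceptional sets $F_k(Q)=\{|b-b_Q|^m>2^{nm}e^m4^{m(k+\tau)}\}$, which satisfy $|F_k(Q)|/|Q|\le e\,e^{-4^{k+\tau}}$, with a generalized Hyt\"onen--P\'erez lemma (Lemma \ref{lem:LetGenHP}) giving $\|w\chi_{F_k(Q)}\|_{L(\log L)^m,Q}\lesssim \frac{k^m}{4^k}\,\rho_{m+1,w}(Q)\,\langle w\rangle_Q$; the complementary set contributes the factor $4^{km}$ which is beaten by the $56^{-km}$ stratification of $\langle f\rangle_Q$ together with the packing/absorption step above. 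Your principal-cube selection on $\rho_{m+1,w}$ also relies on an unproved (and doubtful) comparability of $\|w\|_{L(\log L)^m,\cdot}$ along branches, but even granting it, without the exceptional-set lemma and the absorption argument the proof of Theorem \ref{Thm:TmbS}, and hence of the main theorem, does not go through; note finally that the $\mathcal{T}^{0,m}_{b,\mathcal{S}}$ piece is itself handled in the paper by quoting a level-set result yielding $\Phi_m$, which a uniform bilinear bound would again miss by a logarithm.
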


The remainder of the paper is devoted to the proof of this result.

\section{Proof of the main result}

Our proof relies upon the sparse domination result that was settled
in \cite{LORR,IFRR}.
\begin{thm}
Let $b\in L_{\text{loc}}^{1}$ and let $T$ be a Calderón-Zygmund
operator. Then there exist $N_{\alpha}$ $\alpha$-Carleson families
$\mathcal{S}_{j}$ contained in $3^{n}$ dyadic lattices such that
\[
|T_{b}^{m}f(x)|\leq c_{n,mT}\sum_{j=1}^{N_{\alpha}}\sum_{h=0}^{m}\mathcal{T}_{b,\mathcal{S}_{j}}^{h,m}f(x)
\]
where 
\begin{align*}
\mathcal{T}_{b,\mathcal{S}}^{h,m}f(x) & =\sum_{Q\in\mathcal{S}}|b(x)-b_{Q}|^{h}\frac{1}{|Q|}\int_{Q}|b-b_{Q}|^{m-h}f.
\end{align*}
\end{thm}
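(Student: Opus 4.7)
The plan is to reduce the statement to the pointwise sparse domination of the underlying Calder\'on--Zygmund operator $T$ via the Coifman--Rochberg--Weiss identity. Since $T_{b}^{m} = T_{b-c}^{m}$ for any constant $c$ (the commutator only sees $b$ modulo constants), an induction on $m$ gives
\[
T_{b}^{m} f(x) = \sum_{h=0}^{m} \binom{m}{h} (-1)^{h} (b(x) - c)^{m-h}\, T\bigl((b-c)^{h} f\bigr)(x).
\]
The freedom to choose $c$ is exactly what will manufacture the adapted factors $|b(x) - b_{Q}|^{h}$ in the sparse forms: at each localization step I pick $c$ to be the mean of $b$ on the running ``top'' cube.

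Next I would run the standard recursive stopping-time construction, modified to handle the extra factor $(b-c)^{h}$. After passing to one of the $3^{n}$ shifted dyadic lattices (the one-third trick, which is where the $3^{n}$ in the statement comes from), fix a top cube $Q_{0}$ and apply the identity with $c = b_{Q_{0}}$. It then suffices to dominate, for each $h \in \{0,\dots,m\}$, the quantity $(b-b_{Q_{0}})^{m-h}(x)\cdot T\bigl((b-b_{Q_{0}})^{h} f \chi_{Q_{0}}\bigr)(x)$ pointwise on $Q_{0}$. I would select as stopping cubes the maximal dyadic descendants $Q' \subsetneq Q_{0}$ for which, for some $h$, either
\[
\frac{1}{|Q'|}\int_{Q'} |b-b_{Q_{0}}|^{h} |f| > A\cdot\frac{1}{|Q_{0}|}\int_{Q_{0}} |b-b_{Q_{0}}|^{h} |f|,
\]
or the grand maximal truncation $\mathcal{M}_{T}\bigl((b-b_{Q_{0}})^{h} f\chi_{Q_{0}}\bigr)$ exceeds an analogous threshold. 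With $A$ large enough, the total measure of the selected subcubes is at most $\alpha|Q_{0}|$, which is the $\alpha$-Carleson property. On the complement $Q_{0}\setminus\bigcup Q'$ the maximal truncation, and hence $T$, is pointwise controlled by the stopped average, producing the local contribution $|b(x)-b_{Q_{0}}|^{m-h} \cdot \frac{1}{|Q_{0}|}\int_{Q_{0}}|b-b_{Q_{0}}|^{h}|f|$. Up to the relabeling $h\leftrightarrow m-h$, this is precisely the summand $Q=Q_{0}$ appearing in $\mathcal{T}_{b,\mathcal{S}}^{h,m}$, and iterating on each stopping cube builds the Carleson family.

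The chief technical obstacle is the \emph{simultaneous} stopping: the $m+1$ values of $h$ each contribute an $L^{1}$-threshold and a maximal-truncation threshold, and the packing constant has to be kept small across the union of all these selections. Once this bookkeeping is in place, the weak-$(1,1)$ boundedness of $T$ and of its grand maximal truncation give the pointwise control on $Q_{0}\setminus\bigcup Q'$, and the Carleson property of $\bigcup_{j}\mathcal{S}_{j}$ follows from the single-step packing estimate in the standard way. Summing the resulting decompositions over $h\in\{0,\dots,m\}$ and over the $3^{n}$ shifted lattices yields the stated pointwise bound.
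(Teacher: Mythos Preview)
The paper does not prove this theorem: it quotes the sparse domination result verbatim from \cite{LORR,IFRR} and uses it as a black box. Your sketch is a faithful outline of the argument actually carried out in those references---the Coifman--Rochberg--Weiss expansion with the constant $c$ reset to $b_{Q_{0}}$ at each iteration, combined with a Lerner--Nazarov style stopping-time recursion using the grand maximal truncation---so there is nothing substantive to compare.

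One remark on the sketch itself: the ``simultaneous stopping'' over all $h\in\{0,\dots,m\}$ that you flag as the chief obstacle is handled in \cite{LORR} exactly as you suggest, by taking the union of the $2(m+1)$ stopping families and choosing the threshold $A$ large enough (depending on $m$) so that the total selected measure is still at most $\tfrac{1}{2}|Q_{0}|$; no additional idea is needed beyond this bookkeeping.
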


Observe that, in fact, it suffices to study $\mathcal{T}_{b,\mathcal{S}}^{m,m}$
and $\mathcal{T}_{b,\mathcal{S}}^{0,m}$, since, as it was shown in
\cite[Lemma 2.2]{CUMT},
\[
\mathcal{T}_{b,\mathcal{S}}^{h}f(x)\leq\mathcal{T}_{b,\mathcal{S}}^{m,m}f(x)+\mathcal{T}_{b,\mathcal{S}}^{0,m}f(x)
\]
for every $h\in\{0,\dots,m\}$.

Hence the proof of Theorem \ref{Thm:MainResult} boils down to obtaining
estimates just for $\mathcal{T}_{b,\mathcal{S}_{j}}^{m,m}$ and $\mathcal{T}_{b,\mathcal{S}_{j}}^{0,m}$.

For $\mathcal{T}_{b,\mathcal{S}}^{m,m}$ we provide the following
result.
\begin{thm}
\label{Thm:TmbS}Let $\mathcal{S}$ be a $\alpha$-Carleson family
with $0<56^{m}(\alpha-1)<1$ and $b\in BMO$. Then, 
\[
\|\mathcal{T}_{b,\mathcal{S}}^{m,m}f\|_{L^{1,\infty}(w)}\leq c_{n,\alpha}\|b\|_{BMO}^{m}\sum_{r=0}^{\infty}\frac{1}{\varepsilon(2^{2^{r}})}\|f\|_{L^{1}(M_{\varepsilon,L(\log L)^{m},m+1}w)}
\]
where $\varepsilon:[1,\infty)\rightarrow[1,\infty)$ is an increasing
function.
\end{thm}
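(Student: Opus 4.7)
The plan is to adapt the entropy-decomposition technique of Rahm~\cite{R} (as streamlined by Lacey-Spencer~\cite{LS}) to the iterated commutator sparse operator, combining it with the John-Nirenberg / Orlicz-H\"older technology standard in the commutator sparse literature~\cite{LORR,IFRR}. By homogeneity normalize $\|b\|_{BMO}=1$. First, split $\mathcal{S}$ into the disjoint subfamilies $\mathcal{S}_0:=\{Q\in\mathcal{S}:\rho_{m+1,w}(Q)<4\}$ and, for $r\ge1$, $\mathcal{S}_r:=\{Q\in\mathcal{S}:2^{2^r}\le\rho_{m+1,w}(Q)<2^{2^{r+1}}\}$. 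Each $\mathcal{S}_r$ inherits the $\alpha$-Carleson property (subfamilies of Carleson families are Carleson). For $x\in Q\in\mathcal{S}_r$ we have $\log_2(2+\rho_{m+1,w}(Q))\simeq 2^r$ and $\varepsilon(\rho_{m+1,w}(Q))\ge\varepsilon(2^{2^r})$, which yields the pointwise comparison
\[
\sup_{x\in Q\in\mathcal{S}_r}\langle w\rangle_{L(\log L)^m,Q}\leq\frac{C}{2^r\varepsilon(2^{2^r})}M_{\varepsilon,L(\log L)^m,m+1}w(x).
\]

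The technical core, which is the main obstacle, is to establish an $r$-uniform restricted weak-type Fefferman-Stein estimate
\[
\|\mathcal{T}_{b,\mathcal{S}_r}^{m,m}f\|_{L^{1,\infty}(w)}\leq c_{n,\alpha,m}\|b\|_{BMO}^m\int|f(x)|\sup_{x\in Q\in\mathcal{S}_r}\langle w\rangle_{L(\log L)^m,Q}\,dx.
\]
The approach parallels the commutator weak-type scheme of \cite{LORR,IFRR}: perform a Calder\'on-Zygmund decomposition of $f$ at the threshold $t$, estimate the good part via an $L^2$-type bound on sparse commutators, and handle the bad part through a principal-cubes stopping procedure on $\mathcal{S}_r$. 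The key averaging input is
\[
\frac{1}{|Q|}\int_Q|b-b_Q|^m|g|\leq c\|b\|_{BMO}^m\|g\|_{L(\log L)^m,Q},
\]
which follows from the John-Nirenberg bound $\|b-b_Q\|_{\exp L,Q}\lesssim\|b\|_{BMO}$ together with the generalized Orlicz H\"older inequality via the duality pair $(\exp L^{1/m})^*\simeq L(\log L)^m$. The absence of any $\rho$-blow-up on the right is exactly what the restriction to $\mathcal{S}_r$ buys: the density constraint $\rho_{m+1,w}(Q)\le 2^{2^{r+1}}$ supplies the residual $A_\infty$-type information that, in the unrestricted setting of \cite{LORR,IFRR}, would otherwise force the $L(\log L)^{m+1+\rho}$ bump with a $1/\rho$ blow-up.

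Finally, write $\mathcal{T}_{b,\mathcal{S}}^{m,m}=\sum_r\mathcal{T}_{b,\mathcal{S}_r}^{m,m}$ and apply the standard subadditivity
\[
w\bigl(\{\mathcal{T}_{b,\mathcal{S}}^{m,m}f>t\}\bigr)\leq\sum_r w\bigl(\{\mathcal{T}_{b,\mathcal{S}_r}^{m,m}f>ta_r\}\bigr),
\]
valid whenever $a_r>0$ with $\sum_r a_r\le 1$. Choose $a_r:=2^{-r-1}$. Applying the per-level bound of the previous paragraph and then the pointwise comparison from the first, the $2^{r+1}$ factor from $1/a_r$ cancels exactly the $2^{-r}$ factor from the comparison, yielding
\[
w\bigl(\{\mathcal{T}_{b,\mathcal{S}}^{m,m}f>t\}\bigr)\leq\frac{2c\|b\|_{BMO}^m}{t}\sum_r\frac{1}{\varepsilon(2^{2^r})}\int|f|M_{\varepsilon,L(\log L)^m,m+1}w,
\]
which is the asserted bound.
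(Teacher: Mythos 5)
Your reduction to entropy levels $\mathcal{S}_r=\{Q:2^{2^r}\le\rho_{m+1,w}(Q)<2^{2^{r+1}}\}$ and the pointwise comparison $\sup_{x\in Q\in\mathcal{S}_r}\|w\|_{L(\log L)^m,Q}\lesssim\frac{1}{2^r\varepsilon(2^{2^r})}M_{\varepsilon,L(\log L)^m,m+1}w(x)$ are both correct, but the whole difficulty of the theorem is hidden in the step you label ``the technical core'': the $r$-uniform restricted weak-type bound $\|\mathcal{T}_{b,\mathcal{S}_r}^{m,m}f\|_{L^{1,\infty}(w)}\le c_{n,\alpha,m}\|b\|_{BMO}^m\int|f|\sup_{x\in Q\in\mathcal{S}_r}\|w\|_{L(\log L)^m,Q}$. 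You give only a one-sentence sketch for it, and as stated it is not merely unproved but very likely false. In the $m=0$ analogue it \emph{is} false: if the plain sparse operator satisfied $w(\{\mathcal{A}_{\mathcal{S}_r}f>t\})\le\frac{C}{t}\int|f|\sup_{x\in Q\in\mathcal{S}_r}\langle w\rangle_Q$ with $C$ independent of $r$ whenever all cubes satisfy $\rho_{1,w}(Q)\le2^{2^{r+1}}$, then for a weight with $[w]_{A_1}\le K$ only about $\log_2\log_2K$ levels are nonempty, and your own threshold-splitting device with $a_r\simeq(\log_2\log_2K)^{-1}$ would yield $w(\{Tf>t\})\lesssim\frac{(\log\log K)^2}{t}\int|f|Mw$, hence $\|T\|_{L^1(w)\to L^{1,\infty}(w)}\lesssim K(\log\log K)^2$, contradicting the sharp growth $K\log(e+K)$ for the Hilbert transform established in \cite{LNO} (cited in this paper precisely as the obstruction). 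For $m\ge1$ your claim has exactly the same structure: the $L(\log L)^m$ bump is consumed by H\"older against $\||b-b_Q|^m\|_{\exp L^{1/m},Q}$, and the hypothesis $\rho_{m+1,w}(Q)\le2^{2^{r+1}}$ leaves exactly one logarithm of room --- the same single logarithm as in the $m=0$ case --- so the restriction to a fixed level supplies no mechanism for uniformity in $r$, and the cited arguments of \cite{LORR,IFRR} only give the $1/\rho$-blow-up bump estimates, not this.

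The quantitative bookkeeping also does not close even under a realistic per-level bound. In the paper's proof the factor $\log_2(2+\rho_{m+1,w}(Q))\simeq2^r$ built into $M_{\varepsilon,L(\log L)^m,m+1}$ is spent \emph{inside} each entropy level: after the further splitting $56^{-m(k+1)}<\langle|f|\rangle_Q\le56^{-mk}$, about $2^{r+1}$ values of $k$ interact at level $r$ (the term $L_{11}$), and the John--Nirenberg factors $4^{km}\le2^{r}$ for $k\le r/(2m)$ must be absorbed (the term $L_{21}$); only the gain $\varepsilon(2^{2^r})^{-1}$ survives per level. Your scheme spends that same $2^r$ instead on the subadditivity loss $1/a_r=2^{r+1}$, so a per-level estimate obtained by these methods carries an extra factor $\simeq2^r$ and your final sum becomes $\sum_r2^r/\varepsilon(2^{2^r})$, which can diverge while $\sum_r1/\varepsilon(2^{2^r})$ converges (take $\varepsilon(t)=(\log_2\log_2t)^2$). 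The paper avoids any threshold-splitting loss altogether: it works at a single threshold, restricts to $\{Mf\le56^{-m}\}$ (paying $56^m\int fMw$ by Fefferman--Stein), assumes $w(G)<\infty$, and proves $w(G)\le c\int fM_{\varepsilon,L(\log L)^m,m+1}w+\nu w(G)$ with $\nu<1$ using the Carleson-based decomposition $Q=Q^t\cup\tilde E_Q$ with $t=7^{km}$, then absorbs $\nu w(G)$. To repair your argument you would either have to reproduce such an absorption mechanism (after which the bound is no longer $r$-uniform and your $a_r$ accounting must be redone) or prove the $r$-uniform estimate by a genuinely new argument, which the $m=0$ obstruction above indicates is not available.
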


Observe that for $\mathcal{T}_{b,\mathcal{S}}^{0,m}$ the following
estimate can be recovered from the arguments in \cite{LORR,IFRR}
\begin{thm}
Let $\mathcal{S}$ be a Carleson family and let $b\in BMO$. Then
\[
w\left(\left\{ x\in\mathbb{R}^{n}\,:\,\left|\mathcal{T}_{b,\mathcal{S}}^{0,m}f\right|>t\right\} \right)\leq c_{n,m,\alpha}\int_{\mathbb{R}^{n}}\Phi_{m}\left(\frac{\|b\|_{BMO}^{m}|f|}{t}\right)M_{L(\log L)^{m}}w
\]
where $\Phi_{m}(t)=t\log^{m}(e+t)$.
\end{thm}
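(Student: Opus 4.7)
The plan is to follow the strategy of \cite{LORR,IFRR}: reduce to a pointwise domination by a pure sparse $L(\log L)^{m}$ Orlicz operator via John--Nirenberg and the generalized H\"older inequality, and then run a P\'erez-style Calder\'on--Zygmund argument in which $M_{L(\log L)^{m}}w$ arises from estimating the weight on the exceptional set of a CZ decomposition of $\Phi_{m}(|f|)$. By the homogeneities $f\mapsto f/t$ and $b\mapsto b/\|b\|_{BMO}$, we may assume $t=1=\|b\|_{BMO}$ throughout.

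For the Orlicz sparse domination, the John--Nirenberg inequality gives $\|b-b_{Q}\|_{\exp L,Q}\lesssim \|b\|_{BMO}$ and hence $\|(b-b_{Q})^{m}\|_{\exp L^{1/m},Q}\lesssim \|b\|_{BMO}^{m}$, so the generalized H\"older inequality for the complementary Young pair $(\exp L^{1/m},L(\log L)^{m})$ yields $\frac{1}{|Q|}\int_{Q}|b-b_{Q}|^{m}|f|\lesssim \|b\|_{BMO}^{m}\|f\|_{L(\log L)^{m},Q}$. Consequently
\[
\mathcal{T}_{b,\mathcal{S}}^{0,m}f(x) \;\lesssim\; \|b\|_{BMO}^{m}\,\mathcal{A}_{m,\mathcal{S}}f(x),\qquad \mathcal{A}_{m,\mathcal{S}}f(x):=\sum_{Q\in\mathcal{S}}\|f\|_{L(\log L)^{m},Q}\chi_{Q}(x),
\]
and the problem reduces to proving $w(\{\mathcal{A}_{m,\mathcal{S}}f>c\})\lesssim\int\Phi_{m}(|f|)M_{L(\log L)^{m}}w$.

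For the exceptional set, apply the dyadic CZ decomposition to $\Phi_{m}(|f|)$ at height $1$, producing pairwise disjoint maximal cubes $\{P_{j}\}$ with $1<|P_{j}|^{-1}\int_{P_{j}}\Phi_{m}(|f|)\le 2^{n}$. Setting $\Omega^{*}:=\bigcup_{j}3P_{j}$, the pointwise estimate $\frac{w(3P_{j})}{|3P_{j}|}\le\inf_{P_{j}}M_{L(\log L)^{m}}w$ combined with the CZ bound $|P_{j}|\le\int_{P_{j}}\Phi_{m}(|f|)$ yields
\[
w(\Omega^{*})\;\lesssim\;\sum_{j}|P_{j}|\inf_{P_{j}}M_{L(\log L)^{m}}w\;\le\;\int\Phi_{m}(|f|)\,M_{L(\log L)^{m}}w,
\]
handling the exceptional contribution.

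The main obstacle is the good part on $(\Omega^{*})^{c}$. Here I would split the sparse sum using principal cubes $\mathcal{F}\subset\mathcal{S}$ extracted for the Orlicz functional $Q\mapsto \|f\|_{L(\log L)^{m},Q}$, selecting $P\in\mathcal{F}$ as a maximal cube where this average more than doubles over its $\mathcal{F}$-ancestor. The $\alpha$-Carleson property makes $\{P\in\mathcal{F}\}$ essentially disjoint, and for $Q\in\mathcal{S}$ with principal ancestor $P$ one has $\|f\|_{L(\log L)^{m},Q}\lesssim \|f\|_{L(\log L)^{m},P}$, reducing the pointwise dominant to $\sum_{P\in\mathcal{F}}\|f\|_{L(\log L)^{m},P}\chi_{P}$. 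The corresponding weighted weak-type bound is obtained via a Yano-type extrapolation using that this reduced operator has $L^{p}(M_{L(\log L)^{m}}w)\to L^{p}(w)$ norm blowing up polynomially as $p\to 1$; keeping the exact logarithmic power $m$ throughout, so that $\Phi_{m}$ (and not a slightly larger function) appears on the right-hand side, is the delicate bookkeeping step, and is precisely what is handled in \cite{LORR,IFRR}.
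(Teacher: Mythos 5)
A preliminary remark: the paper does not actually prove this statement; it is imported verbatim from \cite{LORR,IFRR} with the single sentence that it ``can be recovered from the arguments'' there. So there is no in-paper proof to compare against. Your first two steps are correct and are indeed how those arguments begin: John--Nirenberg gives $\|\,|b-b_Q|^m\|_{\exp L^{1/m},Q}\lesssim\|b\|_{BMO}^m$, the generalized H\"older inequality for the complementary pair $(\exp L^{1/m},L(\log L)^m)$ reduces matters to the Orlicz sparse operator $\mathcal{A}_{m,\mathcal{S}}f=\sum_Q\|f\|_{L(\log L)^m,Q}\chi_Q$, and the exceptional set coming from a Calder\'on--Zygmund decomposition of $\Phi_m(|f|)$ at height $1$ is correctly absorbed into $\int\Phi_m(|f|)M_{L(\log L)^m}w$ via $|P_j|\le\int_{P_j}\Phi_m(|f|)$ and $w(3P_j)\lesssim|P_j|\inf_{P_j}Mw$.

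The gap is in the third step, which is where the whole difficulty of the theorem sits, and the mechanism you propose for it is not correct. The claimed pointwise reduction to principal cubes, namely that the dominant can be replaced by $\sum_{P\in\mathcal{F}}\|f\|_{L(\log L)^m,P}\chi_P$, is false: a point $x$ may lie in arbitrarily many cubes $Q\in\mathcal{S}$ sharing the same principal ancestor $P$ (a chain of nested sparse cubes along which the localized Orlicz averages stay comparable, so no new principal cube is ever selected), in which case $\sum_{Q\ni x}\|f\|_{L(\log L)^m,Q}$ is proportional to the length of the chain while your reduced sum is $O(\|f\|_{L(\log L)^m,P})$. Were that reduction valid, the sparse operator would be pointwise comparable to $M_{L(\log L)^m}$, which it is not; the multiplicity $\#\{Q\ni x:\pi(Q)=P\}$ is only controlled in an averaged (Carleson) sense. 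Likewise, a Yano-type extrapolation from polynomial blow-up of $L^p(M_{L(\log L)^m}w)\to L^p(w)$ norms is not how \cite{LORR,IFRR} proceed and, in the two-weight setting, does not obviously deliver $\Phi_m$ paired with $M_{L(\log L)^m}$ without an extra logarithmic bump --- precisely the loss this statement avoids (and which does occur for the companion term $\mathcal{T}^{m,m}_{b,\mathcal{S}}$). What actually closes the argument in the references --- and what this paper itself does for $\mathcal{T}^{m,m}_{b,\mathcal{S}}$ --- is a stratification of $\mathcal{S}$ according to the size of the localized averages, combined with either the exponential decay of the distribution of $\sum_{Q}\chi_Q$ for $\alpha$-Carleson families (estimated against $\|w\|_{L\log L,Q}$ by H\"older with $\exp L$) or the absorption inequality $w(G)\le C\int\Phi_m(|f|)M_{L(\log L)^m}w+\nu\,w(G)$ with $\nu<1$ on the set where $Mf$ is small. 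As written, your proposal asserts the conclusion of that step rather than proving it, and the intermediate pointwise claim it rests on is wrong.
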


Since $M_{L(\log L)^{m}}w\leq M_{\varepsilon,L(\log L)^{m},m+1}w$
that estimate is good enough for our us. The main theorem readily
follows from the combination of the results above, hence it suffices
to settle Theorem \ref{Thm:TmbS} to end the proof. We devote the
remainder of the section and of the paper to that purpose.

\subsection{Lemmatta }

Arguing as in \cite[6.6 Lemma]{HP1} we can get the following lemma.
\begin{lem}
\label{lem:BorrowHP}For a cube $Q$ and a subset $E\subsetneq Q$
we have that
\[
w(E)\leq\frac{2^{n+3}\rho_{w}(Q)}{\log\left(\frac{|Q|}{|E|}\right)}w(Q).
\]
\end{lem}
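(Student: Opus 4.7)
The plan is to mimic the Hyt\"{o}nen--P\'{e}rez argument indicated in the statement, which at its core only requires the local sharp reverse H\"{o}lder inequality tied to $\rho_{w}(Q)$. Concretely, I would invoke the fact that there is a dimensional constant $\tau_{n}$ such that for
\[
r=r_{Q}:=1+\frac{1}{\tau_{n}\rho_{w}(Q)},
\]
the local reverse H\"{o}lder estimate
\[
\left(\frac{1}{|Q|}\int_{Q}w^{r}\right)^{1/r}\leq 2\langle w\rangle_{Q}
\]
holds. This is the local version of the sharp reverse H\"{o}lder inequality of Hyt\"{o}nen--P\'{e}rez; it uses only the value of $\rho_{w}$ at the single cube $Q$ and makes no global $A_{\infty}$ hypothesis on $w$.

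Once that ingredient is available the rest is a short H\"{o}lder argument. Applying H\"{o}lder to $\chi_{E}\cdot w$ on $Q$ with exponents $r$ and $r'$ yields
\[
w(E)=\int_{Q}\chi_{E}w\leq|E|^{1/r'}\left(\int_{Q}w^{r}\right)^{1/r}\leq 2\langle w\rangle_{Q}|Q|^{1/r}|E|^{1/r'}=2w(Q)\left(\frac{|E|}{|Q|}\right)^{1/r'}.
\]
It remains to convert the power of $|E|/|Q|$ into the desired $1/\log$ factor. Since $\rho_{w}(Q)\geq 1$ one has $r\leq 2$, hence $1/r'=(r-1)/r\geq(r-1)/2$; combining this with the elementary bound $xe^{-x}\leq 1$ for $x>0$ gives
\[
\left(\frac{|E|}{|Q|}\right)^{1/r'}\leq\exp\!\left(-\frac{\log(|Q|/|E|)}{2\tau_{n}\rho_{w}(Q)}\right)\leq\frac{2\tau_{n}\rho_{w}(Q)}{\log(|Q|/|E|)}.
\]
Chaining the two displays produces $w(E)\leq\frac{4\tau_{n}\rho_{w}(Q)}{\log(|Q|/|E|)}w(Q)$, and the stated constant $2^{n+3}$ corresponds to taking $\tau_{n}=2^{n+1}$ in the reverse H\"{o}lder step.

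The only real obstacle I anticipate is pinning down the precise dimensional constant in the local sharp reverse H\"{o}lder inequality, since that is what determines whether the final factor comes out exactly $2^{n+3}$; the subsequent H\"{o}lder plus $xe^{-x}\leq 1$ manipulation is essentially one line. Importantly no Carleson hypothesis, BMO machinery or iteration enters---the lemma is a purely one-cube statement that decouples cleanly from the rest of the proof of Theorem \ref{Thm:TmbS}.
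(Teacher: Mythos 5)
Your reduction to a reverse H\"older inequality has a fatal flaw: the key ingredient you invoke is false. There is no ``local sharp reverse H\"older inequality'' with exponent $r=1+\frac{1}{\tau_{n}\rho_{w}(Q)}$ depending only on the single--cube quantity $\rho_{w}(Q)$. The Hyt\"onen--P\'erez sharp RHI genuinely needs $[w]_{A_{\infty}}=\sup_{Q'}\rho_{w}(Q')$ (its proof runs a stopping--time argument over subcubes of $Q$, so at best it localizes to $\sup_{Q'\subseteq Q}\rho_{w}(Q')$), and $\rho_{w}(Q)$ for one fixed cube controls nothing about its subcubes. Quantitatively, $\rho_{w}(Q)\simeq\|w\|_{L\log L,Q}/\langle w\rangle_{Q}$ only says that $w\in L\log L(Q)$, which gives no higher integrability at all: on $Q=(0,1)$ the weight $w(x)=x^{-1}\log(e/x)^{-3}$ has $\rho_{w}(Q)$ equal to a fixed finite constant, yet $\int_{Q}w^{r}=\infty$ for every $r>1$, so the inequality $\bigl(\frac{1}{|Q|}\int_{Q}w^{r}\bigr)^{1/r}\leq2\langle w\rangle_{Q}$ fails for your choice of $r$. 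This is not a repairable technicality in this setting: the whole point of the lemma (and of the entropy--bump framework, where $w$ is an arbitrary weight and the families $\mathcal{S}_{r,k}$ are sorted by the value of $\rho_{m+1,w}$ cube by cube) is that it must be a one--cube statement, so replacing your ingredient by the correct RHI with $[w]_{A_{\infty}}$ or $\sup_{Q'\subseteq Q}\rho_{w}(Q')$ would prove a different, weaker lemma. (A secondary point: even in the $A_{\infty}$ setting the HP constant is of order $2^{11+n}$, not $2^{n+1}$, so the constant $2^{n+3}$ would not come out of that route anyway.)

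The correct mechanism is the one underlying the paper's generalization, Lemma \ref{lem:LetGenHP}: use $\frac{1}{|Q|}\int_{Q}M(\chi_{Q}w)\simeq\|w\|_{L\log L,Q}$, i.e.\ $\rho_{w}(Q)\simeq\|w\|_{L\log L,Q}/\langle w\rangle_{Q}$, and then the generalized H\"older (duality) inequality between $L\log L$ and $\exp L$,
\[
w(E)=\int_{Q}\chi_{E}w\leq2|Q|\,\|w\|_{L\log L,Q}\|\chi_{E}\|_{\exp L,Q},\qquad\|\chi_{E}\|_{\exp L,Q}\lesssim\frac{1}{\log\left(e+\frac{|Q|}{|E|}\right)},
\]
which immediately gives $w(E)\leq\frac{c_{n}\rho_{w}(Q)}{\log(|Q|/|E|)}w(Q)$; equivalently, run the level--set splitting $E=(E\cap J_{\gamma})\cup(E\setminus J_{\gamma})$ with $J_{\gamma}=\{w>e^{\gamma}\langle w\rangle_{Q}\}$ exactly as in the proof of Lemma \ref{lem:LetGenHP}, whose case $k=0$ is precisely Lemma \ref{lem:BorrowHP} up to the explicit constant. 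Your H\"older--plus--$xe^{-x}\leq1$ manipulation at the end is fine, but it sits on top of a false premise; the one--cube statement needs the $L\log L$--$\exp L$ argument, not a reverse H\"older inequality.
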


Lemma \ref{lem:BorrowHP} is an important tool in \cite{R}. In the
following lines we present a result generalizes the lemma above. Before
that we recall that it is a well known fact that
\[
\|w\|_{L\log L^{k},Q}\simeq\frac{1}{|Q|}\int_{Q}w\log\left(e+\frac{w}{w_{Q}}\right)^{k}
\]
and it is also well known that for some $\kappa_{k}\geq1$ 
\[
\Phi_{k}(ab)\leq\kappa_{k}\Phi_{k}(a)\Phi_{k}(b)
\]
where $\Phi_{k}(t)=t\log^{k}(e+t)$. Bearing those facts in mind we
can settle the following Lemma.
\begin{lem}
\label{lem:LetGenHP}Let $Q$ be a cube and $E\subsetneq Q$. Then
there exists $c>0$ depending just on $k$ and $n$, such that
\[
\|w\chi_{E}\|_{L\log^{k}L,Q}\leq c\frac{\log\left(e+\log\left(\frac{|Q|}{|E|}\right)\right)^{k}}{\log\left(\frac{|Q|}{|E|}\right)}\|w\|_{L\log^{k+1}L,Q}
\]
and consequently 
\[
\|w\chi_{E}\|_{L\log^{k}L,Q}\leq c\frac{\log\left(e+\log\left(\frac{|Q|}{|E|}\right)\right)^{k}}{\log\left(\frac{|Q|}{|E|}\right)}\rho_{k+1,w}(Q)\langle w\rangle_{Q}.
\]
\end{lem}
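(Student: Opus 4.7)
My plan is to verify the first inequality directly from the Luxemburg definition of the Orlicz average. Set $L=\log(|Q|/|E|)$; the case $L\le 1$ is trivial, because then $\frac{\log^{k}(e+L)}{L}\gtrsim 1$ and $\|w\chi_{E}\|_{L\log^{k}L,Q}\le\|w\|_{L\log^{k}L,Q}\lesssim\|w\|_{L\log^{k+1}L,Q}$ by the monotonicity property of the averages recorded in the Introduction. So assume $L>1$, put $\mu=\|w\|_{L\log^{k+1}L,Q}$ (so that $\frac{1}{|Q|}\int_{Q}\Phi_{k+1}(w/\mu)\le 1$, where $\Phi_{j}(t)=t\log^{j}(e+t)$), and choose the candidate
\[
\lambda=C\,\frac{\log^{k}(e+L)}{L}\,\mu,
\]
with $C=C(n,k)$ to be fixed later. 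Then $\|w\chi_{E}\|_{L\log^{k}L,Q}\le\lambda$ reduces to proving that $\frac{1}{|Q|}\int_{E}\tfrac{w}{\lambda}\log^{k}\!\bigl(e+w/\lambda\bigr)\le 1$.

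To separate $w$ from $\lambda$ inside the logarithm I would invoke the elementary inequality $\log(e+ab)\le\log(e+a)+\log(e+b)$ with $a=w/\mu$ and $b=\mu/\lambda\le L$, followed by $(x+y)^{k}\le 2^{k-1}(x^{k}+y^{k})$. This yields $\log^{k}(e+w/\lambda)\le 2^{k-1}\bigl(\log^{k}(e+w/\mu)+\log^{k}(e+L)\bigr)$, and plugging the estimate into the target integral splits it, up to the constant $2^{k-1}$, into a high-log piece
\[
\frac{L}{C\log^{k}(e+L)}\int_{E}\frac{w}{\mu}\log^{k}\!\bigl(e+w/\mu\bigr)
\]
and a low-log piece $\frac{L}{C}\int_{E}w/\mu$.

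The low-log piece is precisely where Lemma \ref{lem:BorrowHP} enters: combining that lemma with the classical equivalence $\rho_{w}(Q)w(Q)\simeq|Q|\,\|w\|_{L\log L,Q}$ and the monotonicity $\|w\|_{L\log L,Q}\lesssim\mu$, one obtains $w(E)\le c_{n}|Q|\mu/L$, whence this piece is bounded by $c_{n}|Q|/C$. For the high-log piece I would apply a Calder\'{o}n--Zygmund--type truncation: split $E$ into $\{w\le T\mu\}$ and $\{w>T\mu\}$ with $T\simeq e^{L}/L^{k+1}$, chosen so that $\Phi_{k+1}(T)\simeq e^{L}=|Q|/|E|$. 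The portion on $\{w\le T\mu\}\cap E$ is bounded pointwise by $T\log^{k}(e+T)|E|\simeq|Q|/L$ (since $T|E|\simeq|Q|/L^{k+1}$ and $\log(e+T)\simeq L$), while the portion on $\{w>T\mu\}\cap E$ is controlled via the pointwise bound $(w/\mu)\log^{k}(e+w/\mu)\le\Phi_{k+1}(w/\mu)/\log(e+T)$ together with the defining inequality for $\mu$, contributing $|Q|/\log(e+T)\simeq|Q|/L$. Thus the high-log piece is at most $c_{n,k}|Q|/(C\log^{k}(e+L))\le c_{n,k}|Q|/C$.

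Adding the two pieces, the integral is bounded by $c_{n,k}|Q|/C$, and choosing $C$ large enough closes the first inequality. The ``consequently'' statement follows at once from the identity $\|w\|_{L\log^{k+1}L,Q}=\rho_{k+1,w}(Q)\langle w\rangle_{Q}$. The main subtlety in the argument is the calibration $T\simeq e^{L}/L^{k+1}$ of the truncation level, tuned so that the two halves of the level-set decomposition balance at order $|Q|/L$; beyond this, the proof is a direct extension of the Hyt\"{o}nen--P\'{e}rez scheme that underlies Lemma \ref{lem:BorrowHP}.
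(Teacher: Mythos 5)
Your argument is correct, but it follows a different route from the paper's. The paper proves the lemma from scratch: it splits $E$ along the superlevel set $J_{\lambda/2}=\{x\in Q:\,w(x)>e^{\lambda/2}\langle w\rangle_{Q}\}$ with $e^{-\lambda}=|E|/|Q|$, estimates $\|w\chi_{J_{\gamma}}\|_{L\log^{k}L,Q}$ by inserting the factor $\log\bigl(e+w/\langle w\rangle_{Q}\bigr)/\gamma\ge 1$ and using the submultiplicativity $\Phi_{k}(ab)\le\kappa_{k}\Phi_{k}(a)\Phi_{k}(b)$ to pass to the $L\log^{k+1}L$ norm, and then controls the complementary piece by the pointwise bound $w\le e^{\lambda/2}\langle w\rangle_{Q}$ off $J_{\lambda/2}$ together with $\|\chi_{E}\|_{L\log^{k}L,Q}\simeq 1/\Phi_{k}^{-1}(|Q|/|E|)$, the second term being exponentially small and hence dominated. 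You instead guess the Luxemburg constant $\lambda=C\frac{\log^{k}(e+L)}{L}\mu$ explicitly, split the logarithm via $\log(e+ab)\le\log(e+a)+\log(e+b)$, dispose of the low-log term by invoking Lemma \ref{lem:BorrowHP} together with the Stein equivalence $\frac{1}{|Q|}\int_{Q}M(\chi_{Q}w)\simeq\|w\|_{L\log L,Q}$ (in effect using the $k=0$ case of the lemma as a black box, which is legitimate since Lemma \ref{lem:BorrowHP} is quoted from Hyt\"onen--P\'erez independently of the present statement), and handle the high-log term by a truncation at height $T\mu$ calibrated by $\Phi_{k+1}(T)\simeq|Q|/|E|$. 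Each step checks out: the low-log piece gives $c_{n}|Q|/C$, and your two-sided treatment of the truncation gives the high-log piece the bound $c_{k}|Q|/(C\log^{k}(e+L))$, so choosing $C=C(n,k)$ closes the argument, and the ``consequently'' part is indeed just the definition of $\rho_{k+1,w}(Q)$. The only point to tidy up is the calibration $\log(e+T)\simeq L$ for $T\simeq e^{L}/L^{k+1}$, which holds only once $L\gtrsim_{k}1$ (for moderate $L$ one has $T<1$ when $k\ge 2$); this is harmless because for bounded $L$ the desired inequality is trivial, or one can define $T$ by $\Phi_{k+1}(T)=|Q|/|E|$ and use $\log(e+T)\ge L/(k+2)$ for all $L\ge1$. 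Compared with the paper, your proof is slightly longer and leans on Lemma \ref{lem:BorrowHP} as a base case, but it has the merit of exhibiting the extremal choice of $\lambda$ up front and making transparent where each factor ($1/L$ from the measure of $E$, $\log^{k}(e+L)$ from the logarithm of the truncation level) comes from; the paper's argument is more self-contained, since its $k=0$ case actually re-derives Lemma \ref{lem:BorrowHP} rather than assuming it.
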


\begin{proof}
Let $J_{\gamma}=\left\{ x\in Q\,:\,w(x)>e^{\gamma}\langle w\rangle_{Q}\right\} $.
First we observe that
\begin{align*}
\frac{1}{|Q|}\int_{J_{\gamma}}\frac{w}{\lambda_{0}\langle w\rangle_{Q}}\log^{k}\left(e+\frac{w}{\lambda_{0}\langle w\rangle_{Q}}\right) & \leq\frac{1}{|Q|}\int_{J_{\gamma}}\frac{w}{\lambda_{0}\langle w\rangle_{Q}}\log^{k}\left(e+\frac{w}{\lambda_{0}\langle w\rangle_{Q}}\right)\frac{\log\left(e+\frac{w}{\langle w\rangle_{Q}}\right)}{\log\left(e+\frac{w}{\langle w\rangle_{Q}}\right)}\\
 & \leq\frac{1}{\gamma}\frac{1}{|Q|}\int_{Q}\frac{w}{\lambda_{0}\langle w\rangle_{Q}}\log^{k}\left(e+\frac{w}{\lambda_{0}\langle w\rangle_{Q}}\right)\log\left(e+\frac{w}{\langle w\rangle_{Q}}\right)\\
 & \frac{1}{\gamma}\frac{1}{|Q|}\int_{Q}\frac{w}{\lambda_{0}\langle w\rangle_{Q}}\log^{k}\left(e+\frac{w}{\lambda_{0}\langle w\rangle_{Q}}\right)\log\left(e+\frac{w}{\langle w\rangle_{Q}}\right)\\
\leq & \frac{1}{\gamma}\frac{1}{|Q|}\int_{Q}\frac{w}{\lambda_{0}\langle w\rangle_{Q}}\log^{k}\left(e+\frac{w}{\lambda_{0}\langle w\rangle_{Q}}\right)\log\left(e+\frac{w}{\langle w\rangle_{Q}}\right)\\
\leq & \frac{c_{n}\kappa_{k}}{\gamma}\Phi_{k}\left(\frac{1}{\lambda_{0}}\right)\frac{1}{\langle w\rangle_{Q}}\|w\|_{L\log^{k+1}L,Q}
\end{align*}
Consequently we have that
\begin{align*}
\frac{c_{n}\kappa}{\gamma}\Phi_{k} & \left(\frac{1}{\lambda_{0}}\right)\frac{1}{\langle w\rangle_{Q}}\|w\|_{L\log^{k+1}L,Q}\leq1\\
\iff & \Phi_{k}\left(\frac{1}{\lambda_{0}}\right)\leq\frac{\langle w\rangle_{Q}\gamma}{c_{n}\kappa\|w\|_{L\log^{k+1}L,Q}}\\
\iff & \frac{1}{\lambda_{0}}\leq\Phi_{k}^{-1}\left(\frac{\langle w\rangle_{Q}\gamma}{c_{n}\kappa\|w\|_{L\log^{k+1}L,Q}}\right)\\
\iff & \frac{1}{\Phi_{k}^{-1}\left(\frac{\langle w\rangle_{Q}\gamma}{c_{n}\kappa\|w\|_{L\log^{k+1}L,Q}}\right)}\leq\lambda_{0}
\end{align*}
and this yields
\begin{align*}
\|w\chi_{J_{\gamma}}\|_{L\log^{k}L,Q} & \leq\frac{\langle w\rangle_{Q}}{\Phi_{k}^{-1}\left(\frac{\langle w\rangle_{Q}\gamma}{c_{n}\kappa\|w\|_{L\log^{k+1}L,Q}}\right)}\simeq\frac{\langle w\rangle_{Q}\log\left(e+\frac{\langle w\rangle_{Q}\gamma}{c_{n}\kappa\|w\|_{L\log^{k+1}L,Q}}\right)}{\frac{\langle w\rangle_{Q}\gamma}{c_{n}\kappa\|w\|_{L\log^{k+1}L,Q}}}\\
 & \leq c_{n}\kappa\frac{\log^{k}\left(e+\gamma\right)}{\gamma}\|w\|_{L\log^{k+1}L,Q}
\end{align*}
Having that estimate at our disposal now we can proceed as follows.
Let 
\[
\frac{|E|}{|Q|}=e^{-\lambda}.
\]
Then
\begin{align*}
\|w\chi_{E}\|_{L\log^{k}L,Q} & \leq\|w\chi_{E\cap J_{\lambda/2}}\|_{L\log^{k}L,Q}+\|w\chi_{E\setminus J_{\lambda/2}}\|_{L\log^{k}L,Q}\\
 & \leq c_{n}\kappa\frac{\log^{k}\left(e+\lambda/2\right)}{\lambda/2}\|w\|_{L\log^{k+1}L,Q}+e^{\frac{\lambda}{2}}\frac{w(Q)}{|Q|}\frac{1}{\Phi_{k}^{-1}\left(\frac{|Q|}{|E|}\right)}\\
 & \leq2c_{n}\kappa\frac{\log^{k}\left(e+\lambda\right)}{\lambda}\|w\|_{L\log^{k+1}L,Q}+c_{n}e^{\frac{\lambda}{2}}\frac{w(Q)}{|Q|}\frac{\log^{k}(e+\frac{|Q|}{|E|})}{\frac{|Q|}{|E|}}\\
 & =2c_{n}\kappa\frac{\log^{k}\left(e+\lambda\right)}{\lambda}\|w\|_{L\log^{k+1}L,Q}+c_{n}e^{\frac{\lambda}{2}}\frac{w(Q)}{|Q|}\frac{\log^{k}(e+e^{\lambda})}{e^{\lambda}}\\
 & \leq2c_{n}\kappa\frac{\log^{k}\left(e+\lambda\right)}{\lambda}\|w\|_{L\log^{k+1}L,Q}+c_{n}e^{-\frac{\lambda}{2}}\frac{w(Q)}{|Q|}\log^{k}(e+e^{\lambda})
\end{align*}
Since we have that the first term is larger we are done.
\end{proof}
We end this section recalling that the John-Nirenberg inequality (see
for instance \cite[p. 124]{G}) tells us that if $b\in BMO$ then
\[
|\{x\in Q\,:\,|b(x)-b_{Q}|>\lambda\}|\leq e|Q|e^{-\frac{\lambda}{e2^{n}\|b\|_{BMO}}}.
\]

\subsection{Proof of Theorem \ref{Thm:TmbS}}

We shall assume that $\|b\|_{BMO}=1$ by homogeneity and also that
$f\geq0$ since $T_{b,\mathcal{S}}^{m,m}$ is a positive operartor.
Observe that we can split the sparse family as $\mathcal{S}=\mathcal{S}_{1}\cup\mathcal{\mathcal{S}}_{2}$
where $\mathcal{S}_{1}$ contains the cubes for which $1\leq\rho_{m+1,w}(Q)<2$
and $\mathcal{S}_{2}$ the remaining ones. Then
\[
w(\{T_{b,\mathcal{S}}^{m,m}f>t\})\leq w\left(\left\{ T_{b,\mathcal{S}_{1}}^{m,m}f>\frac{t}{2}\right\} \right)+w\left(\left\{ T_{b,\mathcal{S}_{2}}^{m,m}f>\frac{t}{2}\right\} \right).
\]
Observe that for the first term we have that $w\in A_{\infty}$ with
respect to the family $\mathcal{S}_{1}$ and hence, arguments in \cite{IFRR,LORR}
show that 
\[
w\left(\left\{ T_{b,\mathcal{S}_{1}}^{m,m}f>t\right\} \right)\lesssim\frac{1}{t}\int_{\mathbb{R}^{n}}fMw.
\]
However we will provide an argument for that term as well the sake
of completeness. We shall deal with those terms separatedly. We will
be done provided we can show that
\[
w\left(\left\{ T_{b,\mathcal{S}_{i}}^{m,m}f>t\right\} \right)\lesssim\frac{1}{t}\int_{\mathbb{R}^{n}}|f|M_{\varepsilon,L(\log L)^{m},m+1}w.
\]
We shall proceed as follows. First recall that by homogeneity it suffices
to show that for some $t_{0}>0$ 
\[
w(\{T_{b,\mathcal{S}_{i}}^{m,m}f>t_{0}\})\lesssim\int_{\mathbb{R}^{n}}fM_{\varepsilon,L(\log L)^{m},m+1}w.
\]
We will argue as follows for both terms. Let $\tau>0$ such that $\varphi(t)=\frac{\log^{m}(e+\log(t))}{\log^{m}(t)}$
is decreasing for $t\geq e^{4^{1+\tau}-1}$. Observe that

\begin{align*}
 & w(\{T_{b,\mathcal{S}_{i}}^{m,m}f>4^{\tau m}2^{nm}e^{m}\cdot100\})\\
 & =w\left(\left\{ T_{b,\mathcal{S}_{i}}^{m,m}f>4^{\tau m}2^{nm}e^{m}\cdot100,Mf\leq\frac{1}{56^{m}}\right\} \cup\left\{ T_{b,\mathcal{S}_{i}}^{m,m}f>4^{\alpha m}2^{nm}e^{m}\cdot100,Mf>\frac{1}{56^{m}}\right\} \right)\\
 & =w\left(\left\{ T_{b,\mathcal{S}_{i}}^{m,m}f>4^{\tau m}2^{nm}e^{m}\cdot100,Mf\leq\frac{1}{56^{m}}\right\} \right)+w\left(\left\{ Mf>\frac{1}{56^{m}}\right\} \right)\\
 & \leq w\left(\left\{ T_{b,\mathcal{S}_{i}}^{m,m}f>4^{\tau m}2^{nm}e^{m}\cdot100,Mf\leq\frac{1}{56^{m}}\right\} \right)+56^{m}\int_{\mathbb{R}^{n}}fMw.
\end{align*}
This reduces us to provide a suitable estimate for the first term.
Let us call 
\[
G_{i}=\left\{ T_{b,\mathcal{S}_{i}}^{m,m}f>4^{\tau m}2^{nm}e^{m}\cdot100,Mf\leq\frac{1}{56^{m}}\right\} .
\]
We shall assume that $w(G_{i})<\infty$ since otherwise we already
had that $w(\{T_{b,\mathcal{S}_{i}}^{m,m}f>4^{\tau m}2^{nm}e^{m}\cdot100\})=\infty$
and hence the estimate was trivial. Hence it will suffice to show
that 
\begin{equation}
w(G_{i})\leq c\int_{\mathbb{R}^{n}}fM_{\varepsilon,L(\log L)^{m},m+1}w+\nu_{i}w(G)\label{eq:Target-1}
\end{equation}
for some $\nu_{i}\in(0,1)$ in both cases. We devote the remainder
of the subsection to that purpose.

\subsubsection{Bound for $T_{b,\mathcal{S}_{1}}^{m,m}$}

We shall drop the subscripts of $\mathcal{S}_{1}$ and $G_{1}$ for
the sake of clarity. We split the family $\mathcal{S}$ as follows
$Q\in\mathcal{S}_{k}$ if and only if

\[
\frac{1}{56^{m(k+1)}}<\frac{1}{|Q|}\int_{Q}|f|\leq\frac{1}{56^{km}}.
\]
Then, $\mathcal{S}=\bigcup_{k=1}^{\infty}\mathcal{S}_{k}$. We recall,
as well, that $1\leq\rho_{m+1,w}(Q)<2$ for every cube $Q\in\mathcal{S}$.

Observe that then
\begin{equation}
w(G)\leq\frac{1}{4^{\tau m}2^{nm}e^{m}\cdot100}\sum_{k=1}^{\infty}\sum_{Q\in\mathcal{S}_{k}}\frac{1}{|Q|}\int_{Q}|f|\int_{G\cap Q}|b-b_{Q}|^{m}w\label{eq:Bound-1-1}
\end{equation}
and let us consider, as above, for $Q\in\mathcal{S}_{k}$, 
\[
F_{k}(Q)=\left\{ x\in Q\,:\,|b-b_{Q}|^{m}>2^{nm}e^{m}4^{m(k+\tau)}\right\} .
\]
Again, by John-Nirenberg theorem, since $b\in BMO$, 

\[
\frac{|F_{k}(Q)|}{|Q|}\leq ee^{-4^{k+\tau}}.
\]
Now we argue as follows. Observe that

\begin{align*}
 & \sum_{k=1}^{\infty}\sum_{Q\in\mathcal{S}_{k}}\frac{1}{|Q|}\int_{Q}|f|\int_{G\cap Q}|b-b_{Q}|^{m}w\\
 & \leq\sum_{k=1}^{\infty}\sum_{Q\in\mathcal{S}_{k}}\frac{1}{|Q|}\int_{Q}|f|\int_{G\cap F_{k}(Q)}|b-b_{Q}|^{m}w\\
 & +\sum_{k=1}^{\infty}\sum_{Q\in\mathcal{S}_{k}}\frac{1}{|Q|}\int_{Q}|f|\int_{G\cap(Q\setminus F_{k}(Q))}|b-b_{Q}|^{m}w\\
 & \leq\sum_{k=1}^{\infty}\sum_{Q\in\mathcal{S}_{k}}\frac{1}{|Q|}\int_{Q}|f|\int_{G\cap F_{k}(Q)}|b-b_{Q}|^{m}w\\
 & +2^{nm}e^{m}\cdot4^{\tau m}\sum_{k=1}^{\infty}4^{km}\sum_{Q\in\mathcal{S}_{k}}\frac{1}{|Q|}\int_{Q}|f|\int_{G\cap Q}w\\
 & =\left(L_{1}+4^{nm}e^{m}\cdot4^{\tau m}L_{2}\right)
\end{align*}

Observe that if $Q\in\mathcal{S}_{k}$ and we denote $E_{Q}=Q\setminus\bigcup_{Q'\subsetneq Q,\,Q'\in\mathcal{S}_{k}}Q'$
then 
\begin{equation}
\int_{Q}f\lesssim\int_{E_{Q}}f.\label{eq:DisjointF-1}
\end{equation}
Indeed 

\begin{align*}
\int_{Q}f & =\int_{E_{Q}}f+\sum_{\underset{Q'\in S_{k}}{Q'\subset Q}}\int_{Q'}f\\
 & \leq\int_{E_{Q}}f+\sum_{\underset{Q'\in S_{k}}{Q'\subset Q}}\frac{1}{56^{km}}|Q'|\\
 & \leq\int_{E_{Q}}f+56^{m}(\alpha-1)\frac{|Q|}{56^{(k+1)m}}\\
 & \leq\int_{E_{Q}}f+56^{m}(\alpha-1)\int_{Q}f
\end{align*}
and since $56^{m}(\alpha-1)$ we arrive to the desired conclusion. 

First we deal with $L_{1}$. Since $\varphi(t)=\frac{\log^{m}(e+\log(t))}{\log^{m}(t)}$
is decreasing for $t\geq e^{4^{1+\tau}-1}$ taking into account, that
$\frac{|F_{k}(Q)|}{|Q|}\leq ee^{-4^{k+\tau}}\iff\frac{|Q|}{|F_{k}(Q)|}\geq e^{4^{k+\tau}-1}$,
we have that by Lemma \ref{lem:LetGenHP},
\begin{align*}
\|w\chi_{F_{k}(Q)}\|_{L\log L^{m},Q} & \leq c\frac{\log\left(e+\log\left(\frac{|Q|}{|F_{k}(Q)|}\right)\right)^{m}}{\log\left(\frac{|Q|}{|F_{k}(Q)|}\right)}\rho_{m+1,w}(Q)\langle w\rangle_{Q}\\
 & \leq c\frac{\log\left(e+\log\left(e^{4^{k+\tau}-1}\right)\right)^{m}}{\log\left(e^{4^{k+\tau}-1}\right)}\rho_{m+1,w}(Q)\langle w\rangle_{Q}\\
 & \lesssim\frac{k^{m}}{4^{k}}\rho_{m+1,w}(Q)\langle w\rangle_{Q},
\end{align*}
namely
\begin{equation}
\|w\chi_{F_{k}(Q)}\|_{L\log L^{m},Q}\lesssim\frac{k^{m}}{4^{k}}\rho_{m+1,w}(Q)\langle w\rangle_{Q}.\label{eq:JNDecay}
\end{equation}
Then, since $\rho_{m+1,w}(Q)\leq2$ for every $Q\in\mathcal{S}$,
we have that

\begin{align*}
 & \sum_{k=1}^{\infty}\sum_{Q\in\mathcal{S}_{k}}\frac{1}{|Q|}\int_{Q}|f|\int_{G\cap F_{k}(Q)}|b-b_{Q}|^{m}w\\
 & \lesssim\sum_{k=1}^{\infty}\sum_{Q\in\mathcal{S}_{k}}\int_{E_{Q}}|f|\|w\chi_{F_{k}(Q)}\|_{L\log L^{m},Q}\\
 & \lesssim\sum_{k=1}^{\infty}\sum_{Q\in\mathcal{S}_{k}}\int_{E_{Q}}|f|\frac{k^{m}}{4^{k}}\rho_{m+1,w}(Q)\langle w\rangle_{Q}\\
 & \lesssim\sum_{k=1}^{\infty}\sum_{Q\in\mathcal{S}_{k}}\int_{E_{Q}}|f|\frac{2k^{m}}{4^{k}}\langle w\rangle_{Q}\\
 & \lesssim\sum_{k=1}^{\infty}\sum_{Q\in\mathcal{S}_{k}}\int_{E_{Q}}|f|\frac{2\cdot2^{k}}{4^{k}}\langle w\rangle_{Q}\\
 & \simeq\sum_{k=1}^{\infty}\frac{1}{2^{k}.}\sum_{Q\in\mathcal{S}_{k}}\int_{E_{Q}}|f|Mw\\
 & \lesssim\int_{\mathbb{R}^{n}}|f|Mw.
\end{align*}
Now we turn our attention to $L_{2}$. We begin discussing a suitable
way to break into pices a cube $Q\in\mathcal{S}_{k}$. We shall split
$\mathcal{S}_{k}^{\nu}$ where $\mathcal{S}_{k}^{0}$ is the family
of maximal cubes in $\mathcal{S}_{k}$, $\mathcal{S}_{k}^{j+1}$ is
the family of maximal cubes contained in cubes of $\mathcal{S}_{k}^{j}$
and so on. Let $Q\in\mathcal{S}_{k}^{j}$. Note that by the $\alpha$-Carleson
condition 
\[
\sum_{Q'\in\mathcal{S}_{k}^{j+1}(Q)}|Q'|\leq(\alpha-1)|Q|.
\]
where $\mathcal{S}_{k}^{j+1}(Q)$ stands for the family of cubes of
$\mathcal{S}_{k}^{j+1}$ contained in $Q$. Furthermore, iterating
the left hand side, 
\[
\sum_{Q'\in\mathcal{S}_{k}^{j+t}(Q)}|Q'|\leq(\alpha-1)^{t}|Q|.
\]
Let us call $Q^{t}=\cup_{Q'\in\mathcal{S}_{r,k}^{j+t}(Q)}Q'$. Then
we have that 
\[
Q=Q^{t}\cup\tilde{E}_{Q}
\]
where 
\[
\tilde{E}_{Q}=\bigcup_{s=1}^{t}Q\setminus\cup_{P\in\mathcal{S}_{k}^{j+s}(Q)}P.
\]
Note that for this choice of $\tilde{E_{Q}}$, 
\[
\sum_{Q\in\mathcal{S}_{k}}\chi_{\tilde{E_{Q}}}(x)\leq t.
\]
Let us choose $t=7^{km}$ . Observe that, then

\[
\frac{|Q|}{|Q^{t}|}\geq\frac{1}{\left(\alpha-1\right)^{7^{km}}}=\left(\frac{1}{\alpha-1}\right)^{7^{km}}.
\]
and since $1<\alpha<2,$
\[
\log\left(2\frac{|Q|}{|Q^{t}|}\right)\geq7^{km}\log\left(\frac{1}{\alpha-1}\right).
\]
 Having the discussion above at our disposal we now provide our estimate
for $L_{2}$. First we split the sum in two terms
\[
L_{2}=\sum_{k=1}^{\infty}4^{km}\sum_{Q\in\mathcal{S}_{k}}\frac{1}{|Q|}\int_{Q}|f|w(G\cap Q^{t})+\sum_{k=1}^{\infty}4^{km}\sum_{Q\in\mathcal{S}_{k}}\frac{1}{|Q|}\int_{Q}|f|w(G\cap\tilde{E}_{Q}).
\]
For the first term we observe that  taking into account that for
every cube $Q$, $\rho_{m+1,w}(Q)\leq2$
\begin{align*}
\frac{1}{|Q|}\int_{Q}|f|w(G\cap Q^{t}) & \leq\frac{1}{|Q|}\int_{Q}|f|w(Q^{t})\leq2\|w\|_{L\log L,Q}\|\chi_{Q^{t}}\|_{\exp(L^{m}),Q}\int_{Q}|f|\\
 & =\frac{1}{\log\left(2\frac{|Q|}{|Q^{t}|}\right)}\int_{Q}f\|w\|_{L\log L,Q}\lesssim\frac{1}{\log\left(2\frac{|Q|}{|Q^{t}|}\right)}\int_{Q}f\|w\|_{L\log L,Q}\\
 & \lesssim\frac{1}{7^{km}}\int_{E_{Q}}|f|Mw.
\end{align*}
Hence, 
\begin{align*}
\sum_{k=1}^{\infty}4^{km}\sum_{Q\in\mathcal{S}_{k}}\frac{1}{|Q|}\int_{Q}|f|w(G\cap Q^{t}) & \lesssim\sum_{k=1}^{\infty}\frac{4^{km}}{7^{km}}\sum_{Q\in\mathcal{S}_{k}}\int_{E_{Q}}|f|Mw\\
 & \lesssim\int_{\mathbb{R}^{d}}|f|Mw\sum_{k=1}^{\infty}\left(\frac{4}{7}\right)^{km}\\
 & \lesssim\int_{\mathbb{R}^{d}}|f|Mw.
\end{align*}
For the remaining term
\begin{align*}
 & \sum_{k=1}^{\infty}4^{mk}\sum_{Q\in\mathcal{S}_{k}}\frac{1}{|Q|}\int_{Q}|f|w(G\cap\tilde{E}_{Q})\\
 & =\sum_{k=1}^{\infty}4^{mk}\sum_{\nu=0}^{7^{km}}\sum_{Q\in\mathcal{S}_{k}}\sum_{Q'\in\mathcal{S}_{k}^{\nu}(Q)}\frac{1}{|Q|}\int_{Q}|f|w(G\cap Q')\\
 & \leq\sum_{k=1}^{\infty}\frac{4^{mk}}{56^{mk}}\sum_{\nu=0}^{7^{km}}\sum_{Q\in\mathcal{S}_{k}}\sum_{Q'\in\mathcal{S}_{k}^{\nu}(Q)}w(G\cap Q')\\
 & \leq\sum_{k=1}^{\infty}\frac{4^{km}}{56^{km}}7^{km}w(G)\\
 & \leq\sum_{k=1}^{\infty}\frac{1}{2^{km}}w(G)\leq w(G)
\end{align*}
and hence we are done.

\subsubsection{Bound for $T_{b,\mathcal{S}_{2}}^{m,m}$}

Again, we shall drop the subscripts of $\mathcal{S}_{2}$ and $G_{2}$
for the sake of clarity. First we split the sparse family $\mathcal{S}$
as follows. $Q\in\mathcal{S}_{r,k}$ if
\[
2^{2^{r}}\leq\rho_{m+1,w}(Q)<2^{2^{r+1}}
\]
and 
\[
\frac{1}{56^{m(k+1)}}<\frac{1}{|Q|}\int_{Q}|f|\leq\frac{1}{56^{km}}.
\]
Then, $\mathcal{S}=\bigcup_{r=0}^{\infty}\bigcup_{k=1}^{\infty}\mathcal{S}_{r,k}$.
Observe that 
\begin{equation}
w(G)\leq\frac{1}{4^{\tau m}2^{nm}e^{m}\cdot100}\sum_{r=0}^{\infty}\sum_{k=1}^{\infty}\sum_{Q\in\mathcal{S}_{r,k}}\frac{1}{|Q|}\int_{Q}|f|\int_{G\cap Q}|b-b_{Q}|^{m}w\label{eq:Bound-1}
\end{equation}
Now we further consider for $Q\in\mathcal{S}_{r,k}$
\[
F_{k}(Q)=\left\{ x\in Q\,:\,|b-b_{Q}|^{m}>2^{nm}e^{m}4^{m(k+\tau)}\right\} .
\]
Note that due to the John-Nirenberg inequality this yields
\[
\frac{|F_{k}(Q)|}{|Q|}\leq ee^{-4^{k+\tau}}.
\]
Then 
\begin{align*}
 & \sum_{r=0}^{\infty}\sum_{k=1}^{\infty}\sum_{Q\in\mathcal{S}_{r,k}}\frac{1}{|Q|}\int_{Q}|f|\int_{G\cap Q}|b-b_{Q}|^{m}w\\
 & \leq\sum_{r=0}^{\infty}\sum_{k=1}^{\infty}\sum_{Q\in\mathcal{S}_{r,k}}\frac{1}{|Q|}\int_{Q}|f|\int_{G\cap F_{k}(Q)}|b-b_{Q}|^{m}w\\
 & +\sum_{r=0}^{\infty}\sum_{k=1}^{\infty}\sum_{Q\in\mathcal{S}_{r,k}}\frac{1}{|Q|}\int_{Q}|f|\int_{G\cap(Q\setminus F_{k}(Q))}|b-b_{Q}|^{m}w\\
 & \leq\sum_{r=0}^{\infty}\sum_{k=1}^{\infty}\sum_{Q\in\mathcal{S}_{r,k}}\frac{1}{|Q|}\int_{Q}|f|\int_{G\cap F_{k}(Q)}|b-b_{Q}|^{m}w\\
 & +2^{nm}e^{m}\cdot4^{\tau m}\sum_{r=0}^{\infty}\sum_{k=1}^{\infty}4^{km}\sum_{Q\in\mathcal{S}_{r,k}}\frac{1}{|Q|}\int_{Q}|f|\int_{G\cap Q}w\\
 & =\left(L_{1}+2^{nm}e^{m}\cdot4^{\tau m}\cdot L_{2}\right)
\end{align*}

We observe that if $Q\in\mathcal{S}_{r,k}$ then 
\begin{equation}
\int_{Q}f\lesssim\int_{E_{Q}}f\label{eq:DisjointF}
\end{equation}
where $E_{Q}=Q\setminus\bigcup_{Q'\subsetneq Q,\,Q'\in\mathcal{S}_{r,k}}Q'$.
Note that it suffices to argue as we did to derive \ref{eq:DisjointF-1}
since we only used information relative to the splitting in $k$.

Let us deal now with $L_{1}$. We split the sum in $k$ as follows
\begin{align*}
L_{1} & =\sum_{r=0}^{\infty}\sum_{k=1}^{\log_{2}(2^{2^{r+1}})}\sum_{Q\in\mathcal{S}_{r,k}}\frac{1}{|Q|}\int_{Q}|f|\int_{G\cap F_{k}(Q)}|b-b_{Q}|^{m}w\\
 & +\sum_{r=0}^{\infty}\sum_{k=\log_{2}(2^{2^{r+1}})}^{\infty}\sum_{Q\in\mathcal{S}_{r,k}}\frac{1}{|Q|}\int_{Q}|f|\int_{G\cap F_{k}(Q)}|b-b_{Q}|^{m}w\\
 & =L_{11}+L_{12}.
\end{align*}
Let us focus first on $L_{11}$. Observe that 
\begin{align*}
 & \sum_{r=0}^{\infty}\sum_{k=1}^{\log_{2}(2^{2^{r+1}})}\sum_{Q\in\mathcal{S}_{r,k}}\frac{1}{|Q|}\int_{Q}|f|\int_{G\cap F_{k}(Q)}|b-b_{Q}|^{m}w\\
 & \lesssim\sum_{r=0}^{\infty}\sum_{k=1}^{\log_{2}(2^{2^{r+1}})}\sum_{Q\in\mathcal{S}_{r,k}}\int_{E_{Q}}|f|\frac{\int_{G\cap F_{k}(Q)}|b-b_{Q}|^{m}w}{|Q|}\\
 & \lesssim\sum_{r=0}^{\infty}\sum_{k=1}^{\log_{2}(2^{2^{r+1}})}\sum_{Q\in\mathcal{S}_{r,k}}\int_{E_{Q}}|f|\|w\chi_{F_{k}(Q)}\|_{L\log L^{m},Q}\||b-b_{Q}|^{m}\|_{\exp L^{\frac{1}{m}},Q}\\
 & \lesssim\sum_{r=0}^{\infty}\sum_{k=1}^{\log_{2}(2^{2^{r+1}})}\sum_{Q\in\mathcal{S}_{r,k}}\frac{\log_{2}\left(2+\rho_{m+1,w}(Q)\right)\varepsilon\left(\rho_{m+1,w}(Q)\right)}{\log_{2}\left(2+\rho_{m+1,w}(Q)\right)\varepsilon\left(\rho_{m+1,w}(Q)\right)}\int_{E_{Q}}|f|\|w\chi_{F_{k}(Q)}\|_{L\log L^{m},Q}\\
 & \lesssim\sum_{r=0}^{\infty}\sum_{k=1}^{\log_{2}(2^{2^{r+1}})}\frac{1}{\log_{2}(2+2^{2^{r}})\varepsilon(2^{2^{r}})}\sum_{Q\in\mathcal{S}_{r,k}}\int_{E_{Q}}|f|M_{\varepsilon,L\log L,^{m}}w\\
 & \lesssim\sum_{r=0}^{\infty}\sum_{k=1}^{\log_{2}(2^{2^{r+1}})}\frac{1}{\log_{2}(2+2^{2^{r}})\varepsilon(2^{2^{r}})}\int_{\mathbb{R}^{d}}|f|M_{\varepsilon,L\log L,^{m}}w\\
 & \lesssim\sum_{r=0}^{\infty}\frac{1}{\varepsilon(2^{2^{r}})}\sum_{Q\in\mathcal{S}_{r,k}}\int_{E_{Q}}|f|M_{\varepsilon,L\log L^{m}}w.
\end{align*}
Now we turn our attention to $L_{12}$. Arguing as we did to settle
\ref{eq:JNDecay}, we have by Lemma \ref{lem:LetGenHP} that for $Q\in\mathcal{S}_{r,k}$
\[
\|w\chi_{F_{k}(Q)}\|_{L\log L^{m},Q}\lesssim\frac{k^{m}}{4^{k}}\rho_{m+1,w}(Q)\langle w\rangle_{Q}.
\]
Hence
\begin{align*}
 & \sum_{r=0}^{\infty}\sum_{k=\log_{2}(2^{2^{r+1}})}^{\infty}\sum_{Q\in\mathcal{S}_{r,k}}\frac{1}{|Q|}\int_{Q}|f|\int_{G\cap F_{k}(Q)}|b-b_{Q}|^{m}w\\
 & \lesssim\sum_{r=0}^{\infty}\sum_{k=\log_{2}(2^{2^{r+1}})}^{\infty}\sum_{Q\in\mathcal{S}_{r,k}}\int_{E_{Q}}|f|\|w\chi_{F_{k}(Q)}\|_{L\log L^{m},Q}\\
 & \lesssim\sum_{r=0}^{\infty}\sum_{k=\log_{2}(2^{2^{r+1}})}^{\infty}\sum_{Q\in\mathcal{S}_{r,k}}\int_{E_{Q}}|f|\frac{k^{m}}{4^{k}}\rho_{m+1,w}(Q)\langle w\rangle_{Q}\\
 & \lesssim\sum_{r=0}^{\infty}\sum_{k=\log_{2}(2^{2^{r+1}})}^{\infty}\sum_{Q\in\mathcal{S}_{r,k}}\int_{E_{Q}}|f|\frac{2^{2^{r+1}}k^{m}}{4^{k}}\frac{\varepsilon\left(\rho_{m+1,w}(Q)\right)}{\varepsilon\left(\rho_{m+1,w}(Q)\right)}\langle w\rangle_{Q}\\
 & \lesssim\sum_{r=0}^{\infty}\sum_{k=\log_{2}(2^{2^{r+1}})}^{\infty}\sum_{Q\in\mathcal{S}_{r,k}}\int_{E_{Q}}|f|\frac{2^{2^{r+1}}2^{k}}{4^{k}}\frac{\log_{2}(2+\rho_{m+1,w}(Q))\varepsilon\left(\rho_{m+1,w}(Q)\right)}{\log_{2}(2+\rho_{m+1,w}(Q))\varepsilon\left(\rho_{m+1,w}(Q)\right)}\langle w\rangle_{Q}\\
 & \lesssim\sum_{r=0}^{\infty}\frac{1}{\log_{2}(2+2^{2^{r}})\varepsilon\left(2^{2^{r}}\right)}\sum_{k=\log_{2}(2^{2^{r+1}})}^{\infty}\sum_{Q\in\mathcal{S}_{r,k}}\int_{E_{Q}}|f|M_{\varepsilon,L\log L,m+1}w\frac{2^{2^{r+1}}}{2^{k}}\\
 & \lesssim\sum_{r=0}^{\infty}\frac{2^{2^{r+1}}}{2^{r}\varepsilon\left(2^{2^{r}}\right)}\sum_{k=\log_{2}(2^{2^{r+1}})}^{\infty}\frac{1}{2^{k}}\sum_{Q\in\mathcal{S}_{r,k}}\int_{E_{Q}}|f|M_{\varepsilon,L\log L,m+1}w\\
 & \lesssim\sum_{r=0}^{\infty}\frac{2^{2^{r+1}}}{2^{r}\varepsilon\left(2^{2^{r}}\right)}\sum_{k=\log_{2}(2^{2^{r+1}})}^{\infty}\frac{1}{2^{k}}\int_{\mathbb{R}^{n}}|f|M_{\varepsilon,L\log L,m+1}w\\
 & \lesssim\sum_{r=0}^{\infty}\frac{2^{2^{r+1}}}{2^{r}\varepsilon\left(2^{2^{r}}\right)2^{2^{r+1}}}\int_{\mathbb{R}^{n}}|f|M_{\varepsilon,L\log L,m+1}w\\
 & \simeq\sum_{r=0}^{\infty}\frac{1}{2^{r}\varepsilon\left(2^{2^{r}}\right)}\int_{\mathbb{R}^{n}}|f|M_{\varepsilon,L\log L,m+1}w.
\end{align*}
To provide our estimate for $L_{2},$ we split again in two sums.

\begin{align*}
L_{2} & \leq\sum_{r=0}^{\infty}\sum_{k=1}^{\left\lfloor \frac{r}{2m}\right\rfloor }4^{km}\sum_{Q\in\mathcal{S}_{r,k}}\frac{1}{|Q|}\int_{Q}|f|w(G\cap Q)\\
 & +\sum_{r=0}^{\infty}\sum_{k=\left\lfloor \frac{r}{2m}\right\rfloor }^{\infty}4^{km}\sum_{Q\in\mathcal{S}_{r,k}}\frac{1}{|Q|}\int_{Q}|f|w(G\cap Q)=L_{21}+L_{22}.
\end{align*}
To bound $L_{21}$ we observe that 
\begin{align*}
L_{21}= & \sum_{r=0}^{\infty}\sum_{k=1}^{\left\lfloor \frac{r}{2m}\right\rfloor }4^{km}\sum_{Q\in\mathcal{S}_{r,k}}\frac{1}{|Q|}\int_{Q}|f|w(G\cap Q)\\
 & \lesssim\sum_{r=0}^{\infty}\sum_{k=1}^{\left\lfloor \frac{r}{2m}\right\rfloor }4^{km}\sum_{Q\in\mathcal{S}_{r,k}}\int_{E_{Q}}|f|\frac{w(G\cap Q)}{|Q|}\\
 & \lesssim\sum_{r=0}^{\infty}\sum_{k=1}^{\left\lfloor \frac{r}{2m}\right\rfloor }\sum_{Q\in\mathcal{S}_{r,k}}\frac{\log_{2}\left(2+\rho_{m+1,w}(Q)\right)\varepsilon\left(\rho_{m+1,w}(Q)\right)}{\log_{2}\left(2+\rho_{m+1,w}(Q)\right)\varepsilon\left(\rho_{m+1,w}(Q)\right)}\int_{E_{Q}}|f|\frac{w(G\cap Q)}{|Q|}\\
 & \lesssim\sum_{r=0}^{\infty}\frac{1}{\log_{2}(2+2^{2^{r}})\varepsilon(2^{2^{r}})}\sum_{k=1}^{\left\lfloor \frac{r}{2m}\right\rfloor }4^{km}\sum_{Q\in\mathcal{S}_{r,k}}\int_{E_{Q}}|f|M_{\varepsilon,L,m+1}w\\
 & \lesssim\sum_{r=0}^{\infty}\frac{2^{r}}{\log_{2}(2+2^{2^{r}})\varepsilon(2^{2^{r}})}\int_{\mathbb{R}^{d}}|f|M_{\varepsilon,L,m+1}w\\
 & \lesssim\sum_{r=0}^{\infty}\frac{1}{\varepsilon(2^{2^{r}})}\int_{\mathbb{R}^{d}}|f|M_{\varepsilon,L,m+1}w\\
 & \lesssim\sum_{r=0}^{\infty}\frac{1}{\varepsilon(2^{2^{r}})}\int_{\mathbb{R}^{d}}|f|M_{\varepsilon,L(\log L)^{m},m+1}w
\end{align*}
and hence we are done for this term and it remains to deal with $L_{22}$.
Note that arguing as we did in the previous subsection, for every
cube $Q\in\mathcal{S}_{r,k}$ we have that 
\[
Q=Q^{t}\cup\tilde{E}_{Q}
\]
 where
\[
\sum_{Q\in\mathcal{S}_{r,k}}\chi_{\tilde{E_{Q}}}(x)\leq t
\]
 and
\[
\log\left(2\frac{|Q|}{|Q^{t}|}\right)\geq7^{km}\log\left(\frac{1}{\alpha-1}\right).
\]
Bearing those properties in mind we provide our estimate for $L_{22}.$
We consider the following terms
\begin{align*}
L_{22} & =\sum_{r=0}^{\infty}\sum_{k=\left\lfloor \frac{r}{2m}\right\rfloor }^{\infty}4^{km}\sum_{Q\in\mathcal{S}_{r,k}}\frac{1}{|Q|}\int_{Q}|f|w(G\cap Q^{t})\\
 & +\sum_{r=0}^{\infty}\sum_{k=\left\lfloor \frac{r}{2m}\right\rfloor }^{\infty}4^{km}\sum_{Q\in\mathcal{S}_{r,k}}\frac{1}{|Q|}\int_{Q}|f|w(G\cap\tilde{E}_{Q})\\
 & =L_{221}+L_{222}.
\end{align*}
For $L_{221}$ we observe that 
\begin{align*}
\frac{1}{|Q|}\int_{Q}|f|w(G\cap Q^{t}) & \leq\frac{1}{|Q|}\int_{Q}|f|w(Q^{t})\leq2\|w\|_{L\log L,Q}\|\chi_{Q^{t}}\|_{\exp L,Q}\int_{Q}|f|\\
 & =\frac{1}{\log\left(2\frac{|Q|}{|Q^{t}|}\right)}\int_{Q}f\|w\|_{L\log L}\\
 & \lesssim\frac{1}{7^{km}}\int_{E_{Q}}|f|M_{L\log L}w.
\end{align*}
Hence 
\begin{align*}
\sum_{r=0}^{\infty}\sum_{k=\left\lfloor \frac{r}{2m}\right\rfloor }^{\infty}4^{km}\sum_{Q\in\mathcal{S}_{r,k}}\frac{1}{|Q|}\int_{Q}|f|w(G\cap Q^{t}) & \lesssim\sum_{r=0}^{\infty}\sum_{k=\left\lfloor \frac{r}{2m}\right\rfloor }^{\infty}\frac{4^{km}}{7^{km}}\sum_{Q\in\mathcal{S}_{r,k}}\int_{E_{Q}}|f|M_{L\log L}w\\
 & \lesssim\int_{\mathbb{R}^{d}}|f|M_{L\log L}w\sum_{r=0}^{\infty}\sum_{k=\left\lfloor \frac{r}{2m}\right\rfloor }^{\infty}\left(\frac{4}{7}\right)^{km}\\
 & \lesssim\int_{\mathbb{R}^{d}}|f|M_{L\log L}w\sum_{r=0}^{\infty}\left(\frac{4}{7}\right)^{\frac{r}{2}}\\
 & \lesssim\int_{\mathbb{R}^{d}}|f|M_{L\log L}w.
\end{align*}
Finally, for $L_{222}$, 
\begin{align*}
 & \sum_{r=0}^{\infty}\sum_{k=\left\lfloor \frac{r}{2m}\right\rfloor }^{\infty}4^{mk}\sum_{Q\in\mathcal{S}_{r,k}}\frac{1}{|Q|}\int_{Q}|f|w(G\cap\tilde{E}_{Q})\\
 & =\sum_{r=0}^{\infty}\sum_{k=\left\lfloor \frac{r}{2m}\right\rfloor }^{\infty}4^{mk}\sum_{\nu=0}^{7^{km}}\sum_{Q\in\mathcal{S}_{r,k}}\sum_{Q'\in\mathcal{S}_{r,k}^{\nu}(Q)}\frac{1}{|Q|}\int_{Q}|f|w(G\cap Q')\\
 & \leq\sum_{r=0}^{\infty}\sum_{k=\left\lfloor \frac{r}{2m}\right\rfloor }^{\infty}\frac{4^{mk}}{56^{mk}}\sum_{\nu=0}^{7^{km}}\sum_{Q\in\mathcal{S}_{r,k}}\sum_{Q'\in\mathcal{S}_{r,k}^{\nu}(Q)}w(G\cap Q')\\
 & \leq\sum_{r=0}^{\infty}\sum_{k=\left\lfloor \frac{r}{2m}\right\rfloor }^{\infty}\frac{4^{km}}{56^{km}}7^{km}w(G)\\
 & \leq\sum_{r=0}^{\infty}\sum_{k=\left\lfloor \frac{r}{2m}\right\rfloor }^{\infty}\frac{1}{2^{km}}w(G)\leq8w(G).
\end{align*}
and hence, combining the estimates above we are done.

This ends the proof of Theorem \ref{Thm:TmbS}

\section*{Acknowledgment}

This work will be part of the first author\textquoteright s PhD thesis
at Universidad Nacional del Sur. This research was partially supported
by Agencia I+D+i PICT 2018-02501 and PICT 2019-00018, and by Junta
de Andalucía UMA18FEDERJA002.

\end{document}